\numberwithin{equation}{section}
\newtheorem{Satz}{Satz}[section]
\newtheorem{theorem}[Satz]{Theorem}
\newtheorem{proposition}[Satz]{Proposition}
\newtheorem{remark}[Satz]{Remark}
\newtheorem{corollary}[Satz]{Corollary}
\newtheorem{lemma}[Satz]{Lemma}
\newcommand{\conf}{{\mathrm{conf}}}
\newcommand{\cone}{{\mathrm{cone}}}
\newcommand{\ra}{\rightarrow}
\newcommand{\DD}{{\cal{D}}}
\newcommand{\FFF}{{\cal{S}}}
\newcommand{\FF}{{\cal{F}}}
\newcommand{\RRR}{\mathbb{R}}
\newcommand{\NNN}{\mathbb{N}}
\newcommand{\CCC }{{\cal{C}}}
\newcommand{\DDD }{{\bf{\Delta}}}
\newcommand{\LLL}{{\mathrm{L}}}
\newcommand{\dom}{\mathop{\mathrm{dom}}}
\newcommand{\im}{\mathop{\mathrm{im}}}
\newcommand{\spec}{\mathrm{spec}}
\newcommand{\Lie}{{\cal{L}}}
\newcommand{\ZZZ}{{\mathbb{Z}}}
\newcommand{\Sing}{\mathrm{Sing}}
\newcommand{\C}{{\mathbb{C}}}
\def\haken{\mathbin{\vrule height0.6pt width0.6em \vrule height0.6em}}
\newcommand{\norm}[1]{\lVert #1 \rVert}
\begin{document}

\title{An analytic approach to the stratified Morse inequalities for complex cones}
\author{Ursula Ludwig}

\maketitle

\begin{abstract}
In a previous article the author extended the Witten deformation to singular spaces with cone-like singularities and to a class of Morse functions called admissible Morse functions. The method applies in particular to complex cones and stratified Morse functions in the sense of the theory developed by Goresky and MacPherson. It is well-known from stratified Morse theory that the singular points of the complex cone contribute to the stratified Morse inequalities in middle degree only.  In this article  an  analytic proof of this fact is given.


\end{abstract}

\section{Introduction}

For singular spaces the usual singular homology  looses the nice properties which hold for manifolds. In \cite{goresky2} Goresky and MacPherson introduced a new homology theory for a singular space $X$,  the so-called intersection homology $IH_*(X)$, which has all the nice properties one is used to have for singular homology on manifolds. In particular in \cite{goresky} Goresky and MacPherson developed  a stratified version of Morse theory on singular spaces.

Let us shortly recall one of the main results in \cite{goresky}, namely the stratified Morse inequalities. We restrict hereby to the situation where  $X$ is a compact, complex algebraic (or analytic) variety of dimension $\dim X = 2 \nu$ with isolated singularities, since this is the case treated further in this article.  Let us denote by $\Sing(X)$ the singular set of $X$ and let $f: X \rightarrow \RRR$ be a stratified Morse function on $X$. By definition of a stratified Morse function the restriction $f \restriction _ {X \setminus \Sing (X)}$ is a smooth Morse function and we denote by  $c_i(f \restriction _{X \setminus \Sing (X) })$  the number of critical points of $f \restriction _{X \setminus \Sing (X)}$ of index $i$.  Let us denote by $Ib _i(X)$ the Betti numbers for the intersection homology of $X$ with middle perversity and by
\begin{equation} IP _X(x) := \sum_{i=0}^ {2\nu} Ib _i (X) x^ i  \end{equation}  the Poincar\'e polynomial for the intersection homology of  $X$. The stratified Morse inequalities in Part II of \cite{goresky}, specialised to the situation here read as follows

\begin{theorem}\label{theoremgm} (\cite{goresky}, Section 6.12) 
There exists a polynomial  $Q(x) \in \ZZZ[x]$ with non negative coefficients such that \begin{equation}\label{MIgoresky} \sum_{i=0}^ {2\nu} c_i(f) x^i = IP_X(x) + (1+x) Q(x) ,\end{equation}
where $ \displaystyle \sum_{i=0}^ {2\nu} c_i(f) x^i$ is the Morse polynomial with coefficients \begin{equation}\label{morseinequ1}c_i (f) := \left \{ \begin{array}{ll}  c_i (f _{\restriction X \setminus \Sing (X)}) , &  i \not = \nu , \\
c_{\nu}(f_{\restriction X \setminus \Sing (X)}) +  \sum _{p \in \Sing (X)} m_p, & i = \nu. \\ \end{array} \right . \end{equation}  Hereby the contribution $m_p$ of a singular point $p \in \Sing (X)$ to the Morse inequalities is explicited below. It is concentrated in middle degree and is independent of the chosen stratified Morse function. 
\end{theorem}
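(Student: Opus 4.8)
The plan is to establish the theorem by combining the author's analytic Witten deformation machinery on the singular space $X$ with the local analysis at the singular points. First I would recall that for an admissible Morse function $f$ on $X$ the Witten-deformed complex, built from the $L^2$-cohomology (which computes $IH_*(X)$ by the Cheeger-type results), splits in the semiclassical limit $t \to \infty$ into local contributions: one copy of $\RRR$ in degree $i$ for each smooth critical point of index $i$, plus a local model complex $(C^*_p, \partial_p)$ attached to each singular point $p \in \Sing(X)$. The global Morse-type inequalities for $\sum c_i(f) x^i$ versus $IP_X(x)$ then follow from the standard algebraic fact that the cohomology of a finite complex differs from its term-by-term dimension count by a polynomial multiple of $(1+x)$; so the content of the theorem reduces to showing that the local complex $C^*_p$ at an isolated singular point of a complex variety of complex dimension $\nu$ contributes only in middle degree $\nu$, i.e. $\dim C^i_p = m_p \,\delta_{i\nu}$ with $m_p$ independent of $f$.

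The key steps in order: (i) set up the local model near $p$, where $X$ looks like the cone $cL$ over the link $L$ (a compact space of real dimension $2\nu - 1$), and the function $f$ restricted to this neighbourhood is radial, so the local Witten complex is computed from the cone formula for $L^2$-cohomology; (ii) invoke Cheeger's cone formula, which says that $L^2$-cohomology of the cone $cL$ in degree $i$ equals $H^i(L)$ for $i < \nu$ and vanishes for $i \ge \nu$ (with the truncation at the middle degree being exactly the intersection-cohomology cone condition for middle perversity); (iii) combine this with the corresponding fact for the punctured/deleted neighbourhood and a Mayer-Vietoris or mapping-cone argument to identify $C^*_p$ with the cone of the restriction map, which is concentrated in degree $\nu$ precisely because of the dimension shift $2\nu - 1 \mapsto \nu$ and the complex-dimension parity; (iv) read off that $m_p = \dim C^\nu_p$ is a topological invariant of the singularity (it depends only on the link $L$ and its intersection cohomology, hence on the analytic-local structure of $X$ at $p$, not on $f$), which gives the last sentence of the theorem. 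The first three steps are essentially the local half of the Witten-deformation argument already developed in the author's previous work, specialised to complex cones.

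The main obstacle I expect is step (iii): carefully identifying the local contribution $C^*_p$ of the singular point inside the global deformed complex and proving it is concentrated in the single degree $\nu$. This requires a precise understanding of the local $L^2$-de Rham complex on the truncated cone with the right boundary/ideal boundary conditions, and showing that the relevant harmonic spaces (or the cohomology of the model Witten complex) vanish outside middle degree; the delicate point is that the cone formula gives a one-sided truncation ($H^i(L)$ for $i<\nu$, zero for $i \ge \nu$) rather than concentration in a single degree, so one must take the cone/cofibre of the natural map from the link's cohomology to the cohomology of the deleted neighbourhood and verify that this cofibre sits in degree $\nu$ only — this is where the complex structure (which forces the link to have middle real dimension $2\nu-1$, an odd number straddling $\nu$ symmetrically) is genuinely used. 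Establishing $m_p$'s independence of $f$ is then a corollary, since the local model depends only on the cone metric and the radial form of $f$, and changing the admissible Morse function changes the local picture only up to a chain homotopy that preserves the degree in which $C^*_p$ lives.
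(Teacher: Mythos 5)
There is a genuine gap, and it sits exactly where you flag your ``main obstacle'': step (iii). The local contribution of a singular point is not the $\LLL^2$-cohomology of the cone or of a deleted neighbourhood, but the \emph{relative} group $IH^i(\cone(L_p), l_p^-)$, where $l_p^-$ is the lower halflink $B_\epsilon(p)\cap f^{-1}(f(p)-\delta)$ --- an open piece of the link cut out by $f$, not the link itself. Running the long exact sequence of the pair together with the cone formula, the vanishing of $IH^i(\cone(L_p), l_p^-)$ for $i>\nu$ is equivalent to $H^{i-1}(l_p^-)=0$ for $i-1\geq\nu$, i.e.\ to the statement that the lower halflink, a $(2\nu-1)$-dimensional manifold with boundary, has no cohomology above degree $\nu-1$ (and a dual statement handles $i<\nu$). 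This is not a consequence of ``the dimension shift $2\nu-1\mapsto\nu$ and complex-dimension parity'': a priori $H^*(l_p^-)$ can be nonzero up to degree $2\nu-2$. What is really needed is a Lefschetz/Andreotti--Frankel--Hamm type vanishing theorem, which Goresky and MacPherson obtain by relating the halflink to the Milnor fibre of the associated fibration, a Stein space of complex dimension $\nu-1$; the paper explicitly points to this (``related to vanishing results for the homology of a Stein space''). So your mapping-cone bookkeeping cannot close the argument without importing exactly this deep input. Similarly, the independence of $m_p$ from $f$ is not automatic from ``the local model depends only on the cone metric and the radial form of $f$'': the halflink itself depends on $h$ in $f=rh+O(r^{1+\delta})$, and the independence in the complex case comes from the connectedness of the space of nondegenerate covectors (\cite{goresky}, Prop.\ I.1.8), which you do not address.

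Note also a mismatch with the paper's structure: Theorem \ref{theoremgm} is not proved in this paper at all --- it is quoted from \cite{goresky}, Section 6.12, where the proof is topological. The paper's own contribution is precisely the analytic replacement for the hard step you gloss over: it identifies the local contribution with $\ker\DDD_t^{(i)}$ of the model Witten Laplacian (Theorem \ref{hodgelocal}), and then proves the middle-degree concentration (Theorem \ref{main1}) not by a cone-formula computation but via the $\LLL^2$ Lefschetz theorem, the K\"ahler commutation identity $[L^j,\Delta_t]=0$ for $f$ the real part of a holomorphic function, a bounded primitive $\theta$ of the K\"ahler form near the cone point, Agmon decay of the local harmonic forms, and the Gromov-style pairing argument, with $k<\nu$ handled by Hodge-star duality against $-f$. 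If you want an analytic proof along your lines, this Lefschetz/K\"ahler mechanism (or the Stein vanishing it replaces) is the missing ingredient that must be supplied in your step (iii).
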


Let us explain the contribution  $m_p$ in \eqref{morseinequ1}  in more detail:   Let $B_{\epsilon}(p)$ be the  closed $\epsilon$-ball around $p \in \Sing (X)$. The neighbourhood $B_{\epsilon}(p)$ is homeomorphic to $\cone _{[0,1]}(L_p):= ([0, 1] \times L_p )/ ( \{0\} \times L_p )$, where $L_p$ is a smooth manifold of dimension $2 \nu -1$ called the link of the singularity. Let us choose $0 < \delta << \epsilon$. Since $p$ is an isolated singular point of $X$ the local Morse data of $f$  at $p $ reduce in this case to the normal local Morse data.  They are defined as the pair of spaces   \begin{equation}\label{defmorsedata} (M_p, l_p^-) := \Big ( B_{\epsilon}(p) \cap f^{-1} ([f(p) - \delta, f(p) + \delta ]) , B_{\epsilon}(p) \cap f^{-1} ( f(p) - \delta ) \Big ) .\end{equation} The pair $(M_p, l_p^-)$ is independent of the  choice of $\delta$ and $\epsilon$, if $\delta$ and $\epsilon$ are chosen small enough and such that  $0 < \delta << \epsilon$.   The set $l_p^-$ is called the lower halflink of $f$ at $p$. It is not difficult to see that the  pair $(M_p, l_p^-)$ is homeomorphic to the pair $(B_{\epsilon } (p) , l^-_p)\simeq (\cone _{[0,1]}(L _p), l^-_p)$.  In the complex case the space of non degenerate covectors is connected (see \cite{goresky}, Proposition I.1.8) and therefore the homeomorphism type of the  Morse data near a singular point of $X$ is independent of the Morse function.  For the relative intersection homology with middle perversities of the pair $(M_p, l^ - _p)$ one has (see \cite{goresky})
\begin{equation} \label{morsedatum1} IH _i (M_p, l^ - _p) = 0 \textrm{ for } i \not = \nu .\end{equation} Moreover $m_p$ in \eqref{morseinequ1} is given by \begin{equation} m_p := \dim IH _{\nu} (M_p, l^ - _p)=\dim IH _{\nu} (\cone (L _{p} ), l^ - _p) , \end{equation} 
$\cone (L_p) := ([0, \infty) \times L_p)/(\{0\} \times L_p)$.
 As explained in \cite{goresky}  one can associate to this situation a Milnor fibration  whose fiber is a Stein space. The contribution $m_p$ can also be expressed in terms of the variation map for this fibration.  The  result \eqref{morsedatum1}  is related to  vanishing results for the homology of a Stein space (see \cite{goresky}, Part II, Chapter 6 for more details).

In \cite{ulconf} Morse theory on singular spaces has been approached  using the analytic method of the Witten deformation. The class of spaces  considered in \cite{ulconf} are conformally conic Riemannian manifolds in the sense defined in \cite{leschcone}. Those are a generalisation of  spaces with cone-like singularities. Their  intersection cohomology has an analytic expression in terms of the cohomology of the complex of $\LLL ^2$-forms.  The Witten deformation (proposed in \cite{witten}, rigorously proven in  \cite{hs4} for the smooth situation)  generalised to a conformally conic Riemannian manifold $X$  consists in deforming the complex of $\LLL^2$-forms using a certain class of functions, which were called admissible Morse functions in \cite{ulconf}. One gets again Morse inequalities, this time relating the $\LLL^2$-Poincar\'e polynomial to the number of critical points of the Morse function. Again the singular points of $X$ contribute to this Morse inequalities by $\dim IH^i (\cone (L_p), l_p^-)$ in degree $i$. However, in the analytic proof of the Morse inequalities in \cite{ulconf}, the contribution of the singularities to the Morse inequalities are computed in terms of the model Witten Laplacian $\DDD_t^{(i)}$
\begin{equation} \ker \DDD_t ^{(i)}\simeq  IH^i (\cone (L_p), l_p^-).\end{equation}
Note that in the general situation treated in \cite{ulconf} the contribution of the singularity to the Morse inequalities is not necessarily concentrated in middle degree only and in general depends on the Morse function. However if $X$ is a complex  cone  and $f:X \ra \RRR$ is a stratified Morse function, then $f$ is also admissible in the sense of \cite{ulconf} (see Proposition \ref{propmorsefunction}) and comparing the results in \cite{ulconf} and Theorem \ref{theoremgm} one gets that 

\begin{theorem}\label{main1} 
Let $X$ be a complex cone and $f: X \ra \RRR$ a stratified Morse function, then  \begin{equation} \label{vanishing}
  \ker \DDD_t ^{(i)}\simeq IH ^i (\cone (L_p), l_p^-) = 0 , \textrm{ for } i \not = \nu.
 \end{equation}

\end{theorem}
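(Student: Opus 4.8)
The plan is to compute $\ker\DDD_t^{(i)}$ directly from the model Witten Laplacian and then to feed in the vanishing of the cohomology of the associated Stein fibre, so that the argument is genuinely analytic. Since the homeomorphism type of the Morse data at $p$ does not depend on the chosen stratified Morse function — in the complex case the space of nondegenerate covectors is connected — I would take $f$ near $p$ to be the restriction of a generic complex-linear function; after normalising $f(p)=0$ this gives $f=\mathrm{Re}(h)$ with $h$ holomorphic, $h(p)=0$ and $dh\neq 0$ on a punctured neighbourhood of $p$, so that the local model is the complex cone $\cone(L_p)$ with its Kähler cone metric $dr^2+r^2g_{L_p}$ and $l_p^-$ is the negative part of $L_p$ cut out by $\tilde f$, the linearisation of $f$ along the link. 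There are then two steps: (i) a Cheeger-type separation of variables on the cone, which I expect to produce an isomorphism $\ker\DDD_t^{(i)}\cong IH^i(\cone(L_p),l_p^-)$ in which the surviving modes are parametrised through the variation homomorphism $\mathrm{var}\colon H^{i-1}_c(F)\to H^{i-1}(F)$ of the Milnor fibration attached to $(h,p)$, its fibre $F$ being a smooth Stein manifold of complex dimension $\nu-1$; and (ii) the vanishing of the relevant cohomology of $F$ off the middle degree.

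For step (i): writing an $\LLL^2$-form on $\cone(L_p)$ as $\sum_\lambda(\alpha_\lambda+dr\wedge\beta_\lambda)$ with $\alpha_\lambda,\beta_\lambda$ in the $\lambda$-eigenspaces of the tangential operator on $L_p$, the equation $\DDD_t\omega=0$ decouples — once the $r$-dependent Witten term coming from $t\,df$ is controlled — into radial Bessel- or confluent-hypergeometric-type equations whose indicial roots depend on $\lambda$, on $i$ and on $\tilde f$; square-integrability at the cone tip and the ideal boundary condition along $l_p^-$ then select only finitely many tangential modes, just as in Cheeger's cone calculus but keeping the Witten term, and comparison with the abstract description of relative intersection cohomology yields the identification above. (Alternatively, since $\partial f=\frac12\partial h$ is holomorphic, one may try to read off the middle-degree concentration directly from a Lefschetz- or harmonic-oscillator-type structure of the Kähler Witten Laplacian at $p$; the two routes should agree.)

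Step (ii) is the Andreotti--Frankel mechanism. As $F$ is a smooth Stein manifold of complex dimension $\nu-1$, a strictly plurisubharmonic exhaustion of $F$ is, after a small Morse perturbation, a Morse function all of whose critical points have index $\le\nu-1$ — the negative eigenspace of the real Hessian of a plurisubharmonic function is totally real — so that $H^j(F)=0$ for $j>\nu-1$; dually, by Poincar\'e duality on the open oriented $(2\nu-2)$-manifold $F$, $H^j_c(F)\cong H_{2\nu-2-j}(F)^\ast=0$ for $j<\nu-1$. Hence the variation map is a map between vanishing groups unless $i-1=\nu-1$, and therefore $\ker\DDD_t^{(i)}=0$ for $i\neq\nu$. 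I expect the real obstacle to lie entirely in step (i): carrying out the separation of variables for the genuine model Witten Laplacian, with its $r$-dependent zeroth-order term, rather than for the plain Hodge Laplacian, and, above all, pinning down the correct ideal boundary conditions along $l_p^-$ and at the cone tip so that the surviving radial modes match the relative intersection cohomology and the variation homomorphism precisely. Granting that matching — and using that a stratified Morse function on a complex cone is admissible in the sense of \cite{ulconf} (Proposition \ref{propmorsefunction}), so that the machinery of \cite{ulconf} applies — the theorem follows.
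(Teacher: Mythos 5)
The decisive step of your plan --- your step (i) --- is not a proof but a deferral, and it is exactly where the content of the theorem lies. You assert that a Cheeger-type separation of variables for the model operator $\DDD_t$ will (a) decouple in spite of the $r$-dependent terms produced by $t\,df$, (b) select, via square-integrability at the tip and ``the correct ideal boundary conditions along $l_p^-$'', finitely many surviving modes, and (c) identify those modes with groups built from the Milnor fibre $F$ and its variation homomorphism. None of (a)--(c) is carried out, and (c) in particular is not bookkeeping of indicial roots: the identification of $\ker\DDD_t^{(i)}$ (equivalently, by Theorem \ref{hodgelocal}, of $IH^i(\cone(L_p),l_p^-)$) with the variation map of the Milnor fibration in \emph{all} degrees is the substance of Part II, Chapter 6 of \cite{goresky}. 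Moreover the model metric is only quasi-isometric to (not equal to) an exact metric cone near the tip, so even the plain separation of variables needs care. As you concede, ``the real obstacle lies entirely in step (i)''; with that step unestablished, your step (ii) (Andreotti--Frankel vanishing for the Stein fibre $F$) has nothing to act on, so the argument as written does not prove \eqref{vanishing}. Note also that even a completed version of your route would essentially re-derive the topological proof of \eqref{morsedatum1} via Stein vanishing, which is precisely what this paper sets out to avoid.

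The paper's proof is structurally different and never computes $\ker\DDD_t^{(k)}$ at all. For $k>\nu$ one assumes a unit-norm $\alpha\in\ker\DDD_t^{(k)}$, cuts off near the cone tip using the exponential decay of Theorem \ref{hodgelocal}(ii), writes $\chi\alpha=\omega^j\wedge\beta$ with $j=k-\nu$ by the $\LLL^2$ hard Lefschetz Theorem \ref{proplefschetz}, uses the commutation $[L^j,\Delta_t]=0$ of Proposition \ref{lemlefschetzidentities} --- valid because near the singularity $f$ may be taken to be the real part of a holomorphic function, the general case following by perturbation as in \cite{ulcurve} --- to conclude $\norm{d_t\beta}=O(e^{-ct})$, and then exploits a \emph{bounded} local primitive $\theta$ of the K\"ahler form (Lemma \ref{lembounded}) together with integration by parts to obtain $1+O(e^{-ct})\leq O(e^{-ct})$, a contradiction; the case $k<\nu$ follows by Hodge duality applied to $-f$. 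If you wish to salvage your approach you must either genuinely carry out the separation-of-variables analysis for $\DDD_t$ with the boundary behaviour along $l_p^-$ pinned down, or accept $\ker\DDD_t^{(i)}\simeq IH^i(\cone(L_p),l_p^-)$ as input and give a complete topological computation of the right-hand side --- but the latter is no longer an analytic proof of the statement.
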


The goal of this paper is to give an analytic proof of Theorem \ref{main1}. We use the results in \cite{ulconf}, the Lefschetz Theorem for conformally conic K\"ahler manifolds (see \cite{leschcone}) together with a trick inspired from ``Gromov's trick'' in \cite{gromov} (Theorem 1.4.A).

Let us mention that the Witten deformation of a singular complex curve, which is a particular example of a complex cone, has been studied already in \cite{ul1} and \cite{ulcurve}.  

The paper is organised as follows: In Section 2 we recall basic facts on the $\LLL^2$-cohomology of a singular space and explain the Witten deformation on the complex of $\LLL^2$-forms. We also show that a stratified Morse function on a complex cone is admissible in the sense of \cite{ulconf}. In Section 3 we recall some of the main results in \cite{ulconf} which will be used in the proof of the main theorem, for convenience of the reader. In Section 4 we show some K\"ahler identities.  The main result is finally proved in Section 5.

\section{The Witten Deformation for Complex Cones and Stratified Morse Functions}\label{wittensection}

Let  $X$ be a  compact, complex cone of dimension $\dim X = 2 \nu$ and $f: X \ra \RRR$ a stratified Morse function in the sense of stratified Morse theory in \cite{goresky}. Then in particular $X$ with its metric induced from the Fubini-Study metric of an ambient projective space is a conformally conic Riemannian manifolds  in the sense of \cite{leschcone}. Let us denote by $\Sing (X)$ the singular set of $X$.

\begin{proposition}\label{propmorsefunction} Let  $X$ be a complex cone and $f: X \ra \RRR$ a stratified Morse function. Then

\begin{itemize} \item[(i)] The restriction $f\restriction_{ X \setminus \Sing(X)} $ is a smooth Morse function.  
\item[(ii)] Locally near a singular point $p \in \Sing(X)$ the function $f$ has the form \begin{equation} \label{morselemma} f(r, \varphi) = f(p) +  f_1 (r, \varphi) + f_2(r, \varphi) , \text{ where } f_1 = r h \text{ and } f_2 = O(r^{1+ \delta}),   \end{equation}
$h: L _p \ra \RRR$ is a smooth function and $r$ is the radial coordinate. Moreover there exists a neighbourhood $U$ of the singular set $\Sing(X)$ and a constant $a >0$ such that $\vert \nabla f \vert ^2 \geq a^2$ on $U$.

\end{itemize}
 
\end{proposition}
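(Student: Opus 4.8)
The plan is to deduce (i) directly from the definition of a stratified Morse function, and to extract (ii) from the local conical normal form of $X$ at a singular point together with the Goresky--MacPherson nondegeneracy condition at the vertex. For (i): by \cite{goresky}, Part~I, a stratified Morse function on $X$ is the restriction $\bar f\restriction_{X}$ of a smooth function $\bar f$ on the ambient projective space, and, by definition, the restriction of $f$ to each stratum of $X$ is a smooth Morse function in the ordinary sense; applying this to the top-dimensional stratum $X\setminus\Sing(X)$ gives (i), and nothing more is needed.

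For (ii), fix $p\in\Sing(X)$. Since $X$ is a complex cone, a punctured neighbourhood of $p$ is invariant under the $\C^{*}$-scaling and is therefore diffeomorphic to $(0,\epsilon)\times L_{p}$ via $(r,\varphi)\mapsto r\varphi$, with $L_{p}$ a compact smooth manifold (the link) and $r$ the radial coordinate; moreover the induced metric is, up to a conformal factor bounded between two positive constants near $r=0$ and lower order terms, the conic metric $dr^{2}+r^{2}g_{L_{p}}$ (this is what it means for $X$ to be conformally conic in the sense of \cite{leschcone}). Taylor expanding $\bar f$ at $p$ and restricting to $X$ in these coordinates I obtain
\[ f(r,\varphi)=f(p)+r\,h(\varphi)+O(r^{1+\delta}),\qquad h(\varphi):=\partial_{r}f(0,\varphi)=d\bar f_{p}(\varphi), \]
where $h$ is the restriction to $L_{p}$ of a linear functional on the ambient tangent space, hence a smooth function on $L_{p}$; this is \eqref{morselemma} with $f_{1}=rh$ and $f_{2}=O(r^{1+\delta})$ (one may in fact take $\delta=1$).

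It remains to prove the gradient bound, which is where the \emph{stratified Morse} hypothesis is used and which I regard as the heart of the statement. For the linearly embedded cone one has $T_{r\varphi}X=\RRR\varphi\oplus T_{\varphi}L_{p}$ independently of $r$, so the generalized tangent spaces of $X$ at $p$ are exactly the subspaces $Q_{\varphi}:=\RRR\varphi\oplus T_{\varphi}L_{p}$, $\varphi\in L_{p}$. The defining conditions of a stratified Morse function require, at the $0$-dimensional stratum $\{p\}$, that $d\bar f_{p}$ be a nondegenerate covector, i.e.\ that it not annihilate any generalized tangent space at $p$. Since $d\bar f_{p}\restriction_{\RRR\varphi}=h(\varphi)$ and $d\bar f_{p}\restriction_{T_{\varphi}L_{p}}=d_{\varphi}h$, this says precisely that $(h(\varphi),d_{\varphi}h)\ne(0,0)$ for every $\varphi\in L_{p}$, i.e.\ that $h$ has no critical point on $L_{p}$ at the value $0$; by compactness of $L_{p}$ there is $\mu>0$ with $h^{2}+|dh|_{g_{L_{p}}}^{2}\ge\mu^{2}$ on $L_{p}$. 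Finally, in a conformally conic metric the angular part of a covector carries a weight comparable to $r^{-2}$ and the radial part a bounded weight, so using the expansion above
\[ |\nabla f|^{2}\ \ge\ c_{0}\Bigl((\partial_{r}f)^{2}+r^{-2}|d_{\varphi}f|_{g_{L_{p}}}^{2}\Bigr)\ =\ c_{0}\Bigl(h(\varphi)^{2}+|d_{\varphi}h|_{g_{L_{p}}}^{2}+O(r^{\delta})\Bigr)\ \ge\ \tfrac12 c_{0}\mu^{2} \]
for $r<\epsilon_{0}$ with $\epsilon_{0}$ small enough. Taking $U$ to be the union, over the finitely many points of $\Sing(X)$, of such neighbourhoods $\{r<\epsilon_{0}\}$ and $a^{2}:=\tfrac12 c_{0}\mu^{2}$ finishes the argument.

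The only step that is more than bookkeeping is the identification of the Goresky--MacPherson nondegeneracy of the vertex with the analytic condition $h^{2}+|dh|^{2}>0$ on $L_{p}$ — which rests on writing down the generalized tangent spaces $Q_{\varphi}$ explicitly — together with carefully tracking the $r^{-2}$ weighting in the conic metric, so that this pointwise positivity on the compact link survives as a lower bound for $|\nabla f|$ that is uniform as $r\to0$.
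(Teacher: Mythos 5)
Your proof is correct and follows essentially the same route as the paper: part (i) straight from the definition, and part (ii) from the local expansion of $f$ at the vertex as a linear term $rh$ plus an $O(r^{1+\delta})$ remainder (the paper obtains this by citing \cite{goresky}, Lemma II.2.1.4, giving $f=\mathrm{Re}(F)\restriction_X+O(r^{1+\delta})$, which is exactly your ambient Taylor expansion) combined with the nondegenerate-covector condition at the vertex. The only difference is that you spell out what the paper compresses into ``a stratified Morse function is not critical in normal directions'': identifying the generalized tangent planes of the cone as $\RRR\varphi\oplus T_{\varphi}L_p$, translating nondegeneracy into $h^2+|dh|^2\geq\mu^2>0$ on the compact link, and tracking the conic metric weights to get the uniform bound on $|\nabla f|^2$ — all consistent with the paper's intended argument.
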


\begin{remark} A function on a conformally conic Riemannian manifold which satisfies (i) and (ii) was called  an admissible Morse function in \cite{ulconf}, Definition 2.1.  \end{remark}

\begin{proof} Condition (i) is clear from the definition of a stratified Morse function in \cite{goresky}. As in \cite{goresky}, Lemma II.2.1.4 we have  the following situation in a neighbourhood $U_p$ of a singular point of $X$:
\begin{equation}\label{up} U _p \cap X   \subset \C ^N  .\end{equation} The metric on $  U _p \cap X $ is induced from a K\"ahler metric on $\C ^ N$, and 
\begin{equation} f= \mathrm{Re}(F)_{\mid X} + O(r^{1+ \delta}) , \end{equation} 
where $F : \C ^N \ra \C$ is a linear function on the ambient space, $\mathrm{Re}(F)$ its real part. The local form in \eqref{morselemma} now follows easily, since $X$ is a cone near $p$. Condition (ii) follows from the non-degeneracy condition for a stratified Morse function: a stratified Morse function is not critical in normal directions.

\end{proof}

 In the rest of this section we  review basic facts on the $\LLL^2$-cohomology of conformally conic Riemannian manifolds (see \cite{cheeger2}, \cite{leschcone}).  We rephrase them   using  the language of Hilbert complexes as introduced in \cite{hilbert}. We also shortly  explain the Witten deformation as generalised in  \cite{ulconf}.  Let $(\Omega _0^*(X \setminus  \Sing(X) ), d)$ be the de Rham complex of differential forms with compact supports. The $\LLL^ 2 $-metric on forms is defined as \begin{equation} \langle \alpha  , \beta  \rangle = \int _{X \setminus \Sing(X)} \alpha \wedge * \beta .\end{equation}In the case of conformally conic manifolds of even dimension one has a unique {\it ideal boundary condition}, i.e.  the minimal and maximal extension of $d$ in the space of $\LLL^2$-forms coincide, 
\begin{equation}\label{dmindmax}
d_{i, \min  } = d _{i, \max} \text{ for all } i = 0, \ldots, 2\nu, 
\end{equation}
(see \cite{cheeger2}, \cite{leschcone}). We denote  by $(\CCC, d, \langle \ , \  \rangle)$ the unique extension of the differential complex $(\Omega _0^*(X \setminus \Sing(X) ), d)$ to a Hilbert complex. The cohomology of this complex is the so called $\LLL^2$-cohomology of $X$: \begin{equation} H_{(2)} ^i (X) := \ker d_{i, \min } / \im d_{i-1, \min }= \ker d_{i, \max } / \im d_{i-1, \max }.\end{equation}
Recall that since the Riemannian metric on $X$ is quasi-isometric to a cone-like one and $\LLL^2$-cohomology is an invariant of the quasi-isometry class, the $\LLL^2$-cohomology is isomorphic to the intersection cohomology of $X$ (see \cite{cheeger4}).

 The Witten deformation (\cite{witten}, \cite{hs4}) generalised to this singular setting consists in deforming the complex of $\LLL ^ 2$-forms using the stratified Morse function $f: X \ra \RRR$. Let us denote by $(\Omega ^*_0( X \setminus \Sing(X))  , d_t, \langle \ , \  \rangle)$  the differential complex of smooth forms with compact support on $X \setminus \Sing(X)$, where  $d_ t = e^{-ft}d e^{ft}$ and $\langle \ , \  \rangle$ is the $\LLL^2$-metric, $t \in ( 0, \infty)$ is the deformation parameter. 

By Proposition \ref{propmorsefunction} the stratified Morse function $f$ is admissible in the sense of  \cite{ulconf} and thus by the results shown there
 \begin{equation} \dom (d_{t, \max} )= \dom (d_{\max}) \text{ and } \dom (d _{t,\min}) = \dom (d_{\min}) \end{equation}
and therefore in view of \eqref{dmindmax}
\begin{equation} \dom (d_{t, \max} )= \dom ({d _{t, \min}}) .\end{equation}

 In other words  the  complex $(\Omega _0^*(X \setminus  \Sing(X)), d_t, \langle \ , \  \rangle)$ has a unique {\it ideal boundary condition}, and thus admits a unique extension into a Hilbert complex, which we denote by $(\CCC_t, d_t, \langle \ , \  \rangle)$. Let us denote by $\langle \ , \  \rangle _t$ the twisted $\LLL^ 2$-metric:
\begin{equation} \langle  \alpha , \beta  \rangle _t = \int _{X \setminus \Sing(X)}\alpha \wedge * \beta e^ {-2tf} .\end{equation} The map $\omega \mapsto e^ {-tf} \omega$ induces an isomorphism of Hilbert complexes between $(\CCC, d,  \langle \ , \  \rangle _t) $  and   $({\CCC _t}, {d_t}, \langle \ , \  \rangle) $. Thus also the deformed complex is Fredholm and its cohomology still computes the $\LLL^2$-cohomology of $X$.  Moreover the natural maps 
\begin{equation*} \ker \Delta _{t}^{(i)} =  \ker { d_{t, i}}\cap \ker {\delta_{t,i-1}} \longrightarrow H^i ({\CCC _t}, {d_t}, \langle \ , \  \rangle) \simeq H^i_{(2)}(X) , \quad i = 0, \ldots, 2 \nu, \end{equation*} are isomorphisms.   The  Laplacian $\Delta _t$ is the Laplacian associated to the Hilbert complex $(\CCC _t, d_t, \langle\ , \ \rangle )$  and is called the Witten Laplacian. Let us shortly recall that by  definition (of the Laplacian associated to a Hilbert complex) the Witten Laplacian  is   the closed self-adjoint non-negative extension of  $\Delta _{t} \restriction _{ \Omega _0^*}$ with domain:
\begin{equation} \label{domainlocalmodel}  
  \dom (\Delta  _t ) = \big \{ \omega  \  | \  \omega,  d_t \omega,  \delta_t \omega, d_t \delta _t \omega, \delta _t d_t \omega \in \LLL ^2 \big ( \Lambda ^*(T^*(X \setminus \Sing (X)) )\big ) \big \} . 
 \end{equation} 

Note moreover that  
\begin{equation}\label{domainfriedrich} \Delta _t^ {(i)} = \Delta _t^ {(i), \FF} \text{ for } i \not = \nu, \end{equation}
$\Delta _t^ {(i), \FF}$ being the Friedrichs extension of $\Delta _{t | \Omega _0 ^ *(X \setminus \Sing (X))}$ (see \cite{ulconf}, Proposition 2.6 (c)).

\section{The Spectral Gap Theorem and the Model Operator}

As in the smooth  situation   a  spectral gap theorem holds for the Witten Laplacian:

\begin{theorem}{\bf (Spectral gap theorem)}  \label{thmspectralgap} 
\begin{itemize}
\item[(i)] Let $X$ be a complex cone   and let $f:X \ra \RRR$ be a stratified  Morse function. Then there exist constants $C_1, C_2, C_3 >0$ and $t_0>0$ depending on $X$ and $f$ such that for any $t \geq t_0$,
\begin{equation*} \spec (\Delta _{t}) \cap (C_1e^{-C_2t}, C_3t) = \emptyset.\end{equation*}
\item[(ii)] Let us denote by $(\FFF_t, d_t, \langle \ , \  \rangle  )$ the subcomplex of $(\CCC _t, d_t,  \langle \ , \  \rangle )$ generated by all eigenforms of the Witten Laplacian $\Delta _t$ to eigenvalues in $[0,1]$.  Then, for $t \geq t_0$,
\begin{equation}\label{mpi1} \begin{array} {ll}  \dim \FFF^{i}_t =  c_i (f_{\mid X \setminus \Sing(X)}) + \displaystyle \sum _{ p \in \Sing(X)} m_p^i =:c_i(f),  \end{array} \end{equation} where the contribution of the singular point is given in terms of the intersection cohomology of the local Morse data (cf. \eqref{defmorsedata}), \begin{equation} m_p^i := \dim IH ^ i (M_p, l_p ^ -). \end{equation} 
\end{itemize}
   \end{theorem}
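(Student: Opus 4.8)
The plan is to adapt to the conformally conic setting the standard localization scheme for Witten Laplacians: analyse a model operator near each point of the critical set $\Crit(f\restriction_{X\setminus\Sing(X)})$ and near each point of $\Sing(X)$, control $\Delta_t$ from below on the complement of a small neighbourhood of these sets, and then glue the information together with an IMS-type partition of unity.

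First, the local models. Near a non-degenerate critical point $q$ of index $i$ one chooses Morse coordinates and freezes the metric to the Euclidean one; then $\Delta_t$ is, up to a zeroth order error bounded uniformly in $t$, the classical harmonic-oscillator Witten Laplacian on $\RRR^{2\nu}$, whose spectrum is explicit: $0$ is a simple eigenvalue with eigenform concentrated in degree $i$, and the rest of the spectrum lies in $[2t,\infty)$. Near a singular point $p$, Proposition \ref{propmorsefunction}(ii) puts $f$ in the form $f(p)+rh(\varphi)+O(r^{1+\delta})$ on the metric cone $\cone(L_p)$; freezing the conformal factor and discarding the $O(r^{1+\delta})$ term, $\Delta_t$ becomes the model Witten Laplacian $\DDD_t$ on the infinite cone associated to the function $rh$. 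The two facts needed about $\DDD_t$ are: (a) separation of variables on the cone identifies $\ker\DDD_t^{(i)}$ with $IH^i(\cone(L_p),l_p^-)$, so $\dim\ker\DDD_t^{(i)}=m_p^i$; and (b) $\DDD_t$ has a spectral gap, $\spec(\DDD_t)\cap(0,C_3t)=\emptyset$ — here one exploits the scale invariance of the cone to reduce the $t$-dependence essentially to an overall factor of $t$, and uses the last clause of Proposition \ref{propmorsefunction}(ii), namely $|\nabla f|^2\geq a^2>0$ near $p$, so that a Bochner estimate pushes the essential spectrum of $\DDD_t$ above $C_3t$.

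Next, on the complement of a fixed small neighbourhood of $\Crit(f\restriction_{X\setminus\Sing(X)})\cup\Sing(X)$ one has $|\nabla f|^2\geq c>0$, and the Bochner--Weitzenböck formula $\Delta_t=\Delta+t^2|\nabla f|^2+t\,\mathcal{A}$, with $\mathcal{A}$ a zeroth order operator built from the Hessian of $f$ and bounded on this region, gives $\Delta_t\geq\tfrac12 ct^2$ for $t$ large. Choose a partition of unity $\{\phi_j\}$ with $\sum_j\phi_j^2=1$ subordinate to a cover consisting of one chart around each critical point, one cone chart around each singular point, and charts inside the region where $|\nabla f|$ is bounded below, arranged so that $\sum_j|d\phi_j|^2$ is bounded independently of $t$; the IMS identity $\Delta_t=\sum_j\phi_j\Delta_t\phi_j-\sum_j|d\phi_j|^2$ reduces both assertions to the models. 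For (ii): cutting off by $\phi_j$ the $0$-modes of the models produces an approximately $\Delta_t$-harmonic space of dimension $\sum_i\big(c_i(f\restriction_{X\setminus\Sing(X)})+\sum_p m_p^i\big)=\sum_i c_i(f)$ in the correct degrees, and since the model $0$-modes decay like $e^{-\mathrm{const}\cdot t}$ where $d\phi_j$ is supported, these forms satisfy $\langle\Delta_t\omega,\omega\rangle\leq C_1e^{-C_2t}\|\omega\|^2$; as $(\FFF_t,d_t,\langle\ ,\ \rangle)$ still computes the $\LLL^2$-cohomology, a dimension count gives $\dim\FFF_t^i=c_i(f)$. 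For (i): conversely, if $\Delta_t\omega=\lambda\omega$ with $C_1e^{-C_2t}<\lambda<C_3t$, the IMS identity together with the model gaps forces $\phi_j\omega$ to be almost orthogonal to each finite-dimensional model kernel, on which $\Delta_t$ is bounded below by $C_3t$ — a contradiction once $C_1,C_2,C_3$ are chosen compatibly with the models and the exponentially small cut-off errors.

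The main obstacle is the analysis of the cone model operator $\DDD_t$. Unlike the smooth harmonic-oscillator model it lives on a non-compact singular cone, so one must simultaneously identify its finite-dimensional kernel with the relative intersection cohomology $IH^*(\cone(L_p),l_p^-)$ of the local Morse data, and show that the rest of its spectrum — essential spectrum included — lies above $C_3t$, uniformly in the gluing. This last point is exactly where the non-degeneracy of $f$ in the normal directions is indispensable, and carrying out both statements rigorously inside the $\LLL^2$-Hilbert complex framework is the hard analytic content developed in \cite{ulconf}, on which this theorem rests.
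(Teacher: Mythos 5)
Your proposal is correct in outline, but it takes a genuinely different route from the paper: the paper's entire proof is a reduction-plus-citation, namely checking via Proposition \ref{propmorsefunction} that a stratified Morse function on a complex cone is an admissible Morse function in the sense of \cite{ulconf}, and then invoking \cite{ulconf}, Theorem 1.1, where the spectral gap theorem is proved for admissible Morse functions on general conformally conic manifolds. What you sketch instead is the localization argument lying behind that cited theorem: harmonic-oscillator models at the smooth critical points, the cone model operator $\DDD_t$ at the singular points with $\ker\DDD_t^{(i)}\simeq IH^i(\cone(L_p),l_p^-)$, a lower bound of order $t^2$ away from the critical set, and an IMS gluing with exponentially small quasimode errors; this is indeed the strategy of \cite{ulconf}, and you correctly flag the hard analytic content (Fredholmness of the cone complex, identification of $\ker\DDD_t$ with the intersection cohomology of the local Morse data, the uniform model gap and Agmon decay) as resting on \cite{ulconf} --- the paper packages exactly these facts as Theorem \ref{hodgelocal}. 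Two small caveats on your sketch: the model gap on the cone is of size $ct^2$ (Theorem \ref{hodgelocal}(ii)), as the scaling $r\mapsto r/t$ for the homogeneous part $rh$ of $f$ shows, so ``an overall factor of $t$'' understates it (harmless, since only the weaker bound $C_3t$ is needed); and in the conformally conic setting one cannot simply ``freeze the conformal factor and discard the $O(r^{1+\delta})$ term'' without the perturbation estimates for which the admissibility conditions of Proposition \ref{propmorsefunction}(ii) (the form $f_1=rh$, the error $O(r^{1+\delta})$, and the gradient bound $\vert\nabla f\vert^2\geq a^2$ near $\Sing(X)$) are precisely the input in \cite{ulconf}. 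The paper's citation route is shorter and isolates admissibility as the only new ingredient; your route has the merit of exhibiting where each hypothesis enters the analysis, at the cost of re-deriving (in sketch form) the content of \cite{ulconf}.
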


\begin{proof} By Proposition \ref{propmorsefunction}  any stratified Morse function on a complex cone is also admissible in the sense of \cite{ulconf}. It  has been proved in \cite{ulconf}, Theorem 1.1 that the spectral gap theorem holds for admissible Morse functions on conformally conic manifolds.  \end{proof}

From the spectral gap theorem one immediately deduces the following Morse inequalities (cf. Corollary 1.2 in \cite{ulconf}):

\begin{corollary}  \label{cormorse} In the situation of Theorem \ref{thmspectralgap}
\begin{equation}\label{morseinequ}
\begin{split} & \sum _{i = 0} ^ {k} (-1) ^{k-i}c_i (f)  \geq \sum _{i = 0} ^ {k} (-1) ^{k-i} b_i^{(2)} (X), \text{ for all }  0 \leq k <  2 \nu,
\\
&  \sum _{i = 0} ^ {2\nu} (-1) ^{i}c_i (f)  = \sum _{i = 0} ^ {2 \nu} (-1) ^{i} b_i^{(2)} (X), \end{split} \end{equation} where $b_i^{(2)} (X)$ denote the $\LLL^2$-Betti numbers of $X$.
\end{corollary}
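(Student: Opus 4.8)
The plan is to reduce \eqref{morseinequ} to a purely algebraic statement about the finite complex $(\FFF_t, d_t)$ produced by Theorem \ref{thmspectralgap}, exactly as in the classical Witten argument, taking care only to work consistently inside the Fredholm Hilbert complex $(\CCC_t, d_t, \langle\ ,\ \rangle)$. Fix $t \ge t_0$. First I would use Theorem \ref{thmspectralgap}(i) to note that $\spec(\Delta_t)\cap[0,1]\subset[0,C_1 e^{-C_2 t}]$, so that $\FFF_t$ is the finite-dimensional span of the eigenforms of $\Delta_t$ with eigenvalue $\le C_1 e^{-C_2 t}$ and $\dim\FFF_t^i=c_i(f)$ by part (ii).

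Next I would check that $\FFF_t$ together with its orthogonal complement splits $\CCC_t$ as a direct sum of Hilbert complexes. Since $d_t$ and $\delta_t$ preserve $\dom(\Delta_t)$ and commute with $\Delta_t$, they send each eigenspace of $\Delta_t$ into the eigenspace for the same eigenvalue; in particular $d_t\FFF_t^i\subset\FFF_t^{i+1}$, so $\FFF_t$ is a subcomplex, and the identity $\langle d_t\omega,\eta\rangle=\langle\omega,\delta_t\eta\rangle$ (with $\delta_t\eta\in\FFF_t$ when $\eta\in\FFF_t$) shows that $\FFF_t^{\perp}$ is again a $d_t$- and $\delta_t$-invariant closed subcomplex. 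On $\FFF_t^{\perp}$ the operator $\Delta_t$ has spectrum in $[C_3 t,\infty)$, hence is boundedly invertible with Green operator $G$; for $d_t$-closed $\omega\in\FFF_t^{\perp}$ one has $\omega=\Delta_t G\omega=d_t\delta_t G\omega+\delta_t d_t G\omega$ and $d_t G\omega=G\,d_t\omega=0$, so $\omega=d_t(\delta_t G\omega)$ is exact and $(\FFF_t^{\perp},d_t)$ is acyclic (this is the Hodge decomposition for Fredholm Hilbert complexes, \cite{hilbert}). Consequently the inclusion $\FFF_t\hookrightarrow\CCC_t$ induces isomorphisms $H^i(\FFF_t,d_t)\cong H^i(\CCC_t,d_t,\langle\ ,\ \rangle)\cong H^i_{(2)}(X)$, so $\dim H^i(\FFF_t,d_t)=b_i^{(2)}(X)$.

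Finally I would apply the algebraic Morse inequalities to the finite cochain complex of vector spaces $(\FFF_t^{\bullet},d_t)$. Writing $c_i:=\dim\FFF_t^i$ and $h_i:=\dim H^i(\FFF_t,d_t)$, rank--nullity gives $c_i-h_i=\dim\im d_t^{\,i-1}+\dim\im d_t^{\,i}$, and the alternating sums telescope to $\sum_{i=0}^{k}(-1)^{k-i}(c_i-h_i)=\dim\im d_t^{\,k}\ge 0$ for every $k$, with equality when $k=2\nu$ since $\FFF_t^{2\nu+1}=0$. Substituting $c_i=c_i(f)$ and $h_i=b_i^{(2)}(X)$ yields exactly \eqref{morseinequ}. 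The only point needing care is the middle paragraph: verifying that $\FFF_t$ and $\FFF_t^{\perp}$ are genuine subcomplexes and that the latter is acyclic, i.e. that the relevant eigenforms lie in $\dom(d_t)\cap\dom(\delta_t)\cap\dom(\Delta_t)$ and that the Hodge identities hold on the singular space $X$; this is already contained in the construction of $\FFF_t$ in \cite{ulconf} and in the Hilbert-complex formalism of \cite{hilbert}, so no new analytic input is required beyond Theorem \ref{thmspectralgap}.
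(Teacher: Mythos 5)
Your proposal is correct and follows essentially the same route as the paper, which deduces the corollary from the spectral gap theorem via the finite-dimensional small-eigenvalue subcomplex $\FFF_t$ (the paper simply cites Corollary 1.2 of \cite{ulconf} for this standard deduction). Your spelled-out version — $\FFF_t$ and its orthogonal complement split $\CCC_t$, the complement is acyclic by the Hodge decomposition for Fredholm Hilbert complexes, and the algebraic Morse inequalities applied to $\FFF_t$ with $\dim\FFF_t^i=c_i(f)$ and $\dim H^i(\FFF_t,d_t)=b_i^{(2)}(X)$ — is exactly the intended argument.
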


As explained in the introduction  we can  compare the results in Theorem \ref{thmspectralgap} and Corollary \ref{cormorse} with the results from stratified Morse theory in \cite{goresky}. Using the duality between $\LLL^2$-cohomology and intersection homology, we deduce the claim of Theorem \ref{main1} immediately. However the goal here is to give an analytic proof of it.

The main step in the proof of the spectral gap theorem is the study of the local model operator near a singular point of $X$, which we shortly recall: Let $p \in \Sing(X)$.  As seen before a sufficiently small neighbourhood $U_p$ of $p$ is homeomorphic to $\cone(L_p)$, where $L_p$ is the link of the singularity. With no loss of generality we can assume that   the link $L_p$ of $X$ at $p$ is connected. In the following we denote by \begin{equation} \cone(L_p) := [0, \infty) \times L_p / \{0 \} \times L_p \end{equation}  the infinite cone over $L_p$. For $\epsilon > 0$ we denote by $\cone _{\epsilon }(L_p)$ the  open cone \begin{equation} \cone _{\epsilon }(L_p) := \{ (r, \varphi) \in \cone (L_p) \mid r < \epsilon \} .\end{equation}

We choose the metric $g_{\conf}$ on the infinite cone to be the metric of $X$ near the cone point and to be a conic model metric $dr^2 + r^2 g_{L_p}(0)$ for $r > 2 \epsilon$. Let $f: \cone (L) \ra \RRR$ be a stratified Morse function near the singularity.  Moreover for $r > 2 \epsilon$ let $f$ be of the form $ f= rh$ with $h :L \ra \RRR$ a function on the link. 

 For simplicity of notation we will from now on omit the subscript $_p$ and simply write $L$, $M$, $l^ -$ instead of $L_p$, $M_p$, $l^ -_p$ etc.

 We denote by $ \big ( \Omega ^*_0 (\cone(L)), d , \langle \ , \ \rangle \big )$ be the de Rham complex of smooth compactly supported forms on the infinite cone $ \big ( \cone(L) , g_{\conf} \big )$ and  by \linebreak $(\Omega ^ *_0 (\cone(L)), d_t, \langle \  , \   \rangle)$  the complex obtained by deforming the de Rham complex by using the function $f$, {\it i.e.} $d_t :=  e^{-tf} d e^{tf}$. Then there  is a unique Hilbert complex $(\DD _t, d_t, \langle \ , \ \rangle)$ extending the complex  $ \big ( \Omega ^ *_0 (\cone(L)), d_t, \langle \ , \  \rangle \big )$ (see \cite{ulconf}). The model Witten Laplacian $\DDD _t$  is defined as the Laplacian  associated to the Hilbert complex $(\DD _t, d_t, \langle \ , \ \rangle)$.   
\begin{theorem} \label{hodgelocal}
 \begin{itemize}
\item[(i)] The complex $(\DD_ t,d _t,\langle \ , \  \rangle) $ is Fredholm.  The natural maps 
\begin{equation}\label{hodgeisomlocal} \ker (\DDD _{t} ) \ra H^i(\DD _t, d_t, \langle \ , \ \rangle) , \quad i = 0, \ldots , 2 \nu ,\end{equation} 
are isomorphisms, where  $\DDD_t$ denotes the Laplacian associated to the complex $(\DD _t, d_t, \langle \ , \ \rangle)$ and $\DDD_t^{(i)}$ its restriction to $i$-forms. 
\item [(ii)] The model Witten Laplacian $\DDD_t$ satisfies a local spectral gap theorem: there exists $c>0$ such that for $t$ large enough \begin{equation} \spec (\DDD  _{t}) \subset \{0 \} \cup  [ct^2 , \infty). \end{equation} Moreover all forms in $\ker ( \DDD _{t})$, as well as their derivatives have  exponential decay outside a small neighbourhood of the singularity.
\item[(iii)] For the cohomology of the  complex $(\DD _t, d_t, \langle \ , \ \rangle)$ one gets \begin{equation} H^i(\DD _t, d_t, \langle \ , \ \rangle) \simeq IH ^i(\cone  (L), l^-) \simeq IH ^ i (M, l^ -),\end{equation} where $l^-$ is the lower halflink and $(M, l^ -)$ are the local Morse data, defined as  in \eqref{defmorsedata}. \end{itemize}

\end{theorem}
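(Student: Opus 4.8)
\emph{Strategy.} Parts (i)--(iii) are the local heart of the spectral gap theorem of \cite{ulconf}, specialised to the model operator over the cone; I describe how I would prove them. Throughout we use that, after conjugation by $e^{-tf}$, the complex $(\DD_t,d_t,\langle\ ,\ \rangle)$ is isometric to the $\LLL^2$-de Rham complex of $\cone(L)$ with the weighted density $e^{-2tf}\,\langle\ ,\ \rangle$, and the Bochner--Weitzenb\"ock formula $\DDD_t=\Delta+t\,\mathrm{Hess}(f)\cdot+t^2|\nabla f|^2$. For (i): near the cone tip the metric is quasi-isometric to an exact conic metric of even dimension $2\nu$, for which the de Rham complex has a unique ideal boundary condition and is Fredholm by Cheeger's cone calculus \cite{cheeger2}; on the exact end $r>2\epsilon$ one has $f=rh$, so that $|\nabla f|^2=h^2+|d_Lh|^2_{g_L}$ is a fixed positive function on the compact link, bounded below by $a^2$ (Proposition \ref{propmorsefunction}), while $|\mathrm{Hess}(f)|=O(1/r)$, so $t^2|\nabla f|^2$ is a confining potential at infinity. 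Together these give a Poincar\'e-type inequality with compact remainder, whence the resolvent of $\DDD_t$ is compact, $(\DD_t,d_t)$ is a Fredholm Hilbert complex, and the maps \eqref{hodgeisomlocal} are the abstract Hodge isomorphisms valid for any Fredholm Hilbert complex \cite{hilbert}.

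\emph{(ii) The local spectral gap.} The exact cone carries the dilation $r\mapsto \rho/t$, under which the conic metric $dr^2+r^2g_L$ becomes $t^{-2}(d\rho^2+\rho^2g_L(\rho/t))$, the scale-invariant potential $|\nabla f|^2=h^2+|d_Lh|^2_{g_L}$ is unchanged, and the exponent $tf=trh$ turns into $\rho h$, independent of $t$. Conjugating $\DDD_t$ by this dilation together with the unitary rescaling of forms that compensates the conformal factor, and letting $t\to\infty$ so that $g_L(\rho/t)\to g_L(0)$, identifies $\DDD_t$ on the exact part --- up to the overall factor $t^2$ --- with a small perturbation of the Witten Laplacian $\DDD_\infty$ of the \emph{exact} tangent cone $d\rho^2+\rho^2g_L(0)$ equipped with the linear weight $\rho h$ at parameter $1$. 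One then analyses $\DDD_\infty$ directly by separating variables along the link: $\Delta_L$ has discrete spectrum, and for each link eigenmode the radial problem reduces to a Bessel-type ODE whose analysis, in the spirit of \cite{cheeger2} but now carrying the first-order term coming from $h$, shows that only the modes attached to harmonic forms on the link can produce eigenvalues below a fixed constant; this gives $\spec(\DDD_\infty)\subset\{0\}\cup[c,\infty)$ with $\ker\DDD_\infty^{(i)}$ finite dimensional. An IMS localisation over the cover of $\cone(L)$ by the exact end $\{r>2\epsilon\}$ and a small neighbourhood of the tip --- on which, after the same rescaling, $\DDD_t$ is again close to $\DDD_\infty$ because $f=f(p)+rh+O(r^{1+\delta})$ --- with cut-off errors of fixed size $O(\epsilon^{-2})$, transplants the gap to $\DDD_t$ and yields $\spec(\DDD_t)\subset\{0\}\cup[ct^2,\infty)$ for $t$ large, together with $\dim\ker\DDD_t^{(i)}=\dim\ker\DDD_\infty^{(i)}$. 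Exponential decay of the harmonic forms and, by elliptic regularity, of their derivatives away from the tip comes from an Agmon estimate: the eigenvalue $0$ lies far below the classically allowed region $\{t^2|\nabla f|^2\geq t^2a^2\}$ at infinity, which forces decay like $e^{-cta(r)}$ in the Agmon distance (compare \cite{hs4}).

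\emph{(iii) Identification with the Morse data.} By (i), $H^i(\DD_t,d_t)$ is the weighted $\LLL^2$-cohomology of $\cone(L)$ with density $e^{-2trh}$; for fixed $t>0$ this weight decays rapidly over the part of the link where $h>0$ and is merely bounded where $h=0$. A local Poincar\'e-lemma computation on the cone --- the one underlying the calculation of $\LLL^2$-cohomology of metric cones in \cite{cheeger2} --- then shows that this weighted cohomology equals the intersection cohomology of $\cone(L)$ relative to the directions where $h>0$, i.e.\ $IH^i(\cone(L),l^-)$ with $l^-$ the lower halflink; combined with the homeomorphism of pairs $(\cone(L),l^-)\simeq(M,l^-)$ recalled after \eqref{defmorsedata}, this gives the last isomorphism. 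All of this is carried out in \cite{ulconf}, and it matches Goresky--MacPherson's description of the normal Morse data in \cite{goresky}.

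\emph{Main obstacle.} The substantive step is the direct spectral analysis of the model operator $\DDD_\infty$ on the exact cone: one must establish in one stroke that its kernel realises the intersection cohomology of the Morse data \emph{and} that it is separated from the rest of the spectrum by a uniform gap. Everything else is either formal (Hodge theory for Fredholm Hilbert complexes), a scaling-and-localisation argument, or a standard cone computation.
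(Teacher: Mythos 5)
Your outline is consistent with the known analysis, but it takes a genuinely different route from the paper: the paper's entire proof of this theorem is a one-line reduction, namely that by Proposition \ref{propmorsefunction} a stratified Morse function on a complex cone is admissible in the sense of \cite{ulconf}, so that Theorem 3.1 of \cite{ulconf} applies verbatim and delivers (i)--(iii). What you do instead is reconstruct the interior of that cited theorem: Fredholmness and the Hodge isomorphisms from the Hilbert-complex formalism of \cite{hilbert} plus a confining-potential/compact-resolvent argument, the gap $\spec(\DDD_t)\subset\{0\}\cup[ct^2,\infty)$ via rescaling to a $t$-independent model on the exact cone, Cheeger-type separation of variables along the link, IMS localisation and Agmon decay, and finally the identification of the weighted $\LLL^2$-cohomology with $IH^*(\cone(L),l^-)\simeq IH^*(M,l^-)$. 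That buys a self-contained picture of where the theorem comes from, at the price of leaving the hardest step (which you honestly flag: the simultaneous computation of $\ker\DDD_\infty$ as the intersection cohomology of the Morse data and the uniform gap) at the level of a sketch, whereas the paper deliberately outsources all of it; the only new input the paper actually needs, and which your write-up uses but does not single out as the reduction step, is precisely Proposition \ref{propmorsefunction}. One small imprecision in your part (iii): the relative condition along the lower halflink is produced by the exponential \emph{growth} of the weight $e^{-2trh}$ on the directions where $h<0$ (forcing decay of admissible forms there), not merely by its decay where $h>0$; as stated your description of the weight does not by itself distinguish the pair $(\cone(L),l^-)$ from the absolute cohomology of the cone.
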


\begin{proof} Since by Proposition \ref{propmorsefunction} a stratified Morse function on a cone is admissible and therefore Theorem 3.1 in \cite{ulconf} yields the claims.  \end{proof}

\section{The Lefschetz Theorem and K\"ahler Identities}

The complex cone inherits a K\"ahler structure from the ambient projective space (equipped with the Fubini-Study metric). We denote by $J$ the complex structure  and by $\omega$ the K\"ahler form on $X$. Moreover we denote by $T_{\C} X$ (resp. $T^*_{\C} X$) the complexified tangent bundle (resp. cotangent bundle) on $X \setminus \Sing (X)$. Recall from \cite{leschcone}, Section 5 that the K\"ahler-Hodge Theorem as well as the $\LLL^2$-K\"ahler package holds for $X$.

\begin{theorem}(Lefschetz)\label{proplefschetz}

For $2k+2j= 2 \nu$ the map
\begin{equation}\begin{array}{ cccc} &  L^j : \LLL^2 ( \Lambda ^{k} (T^*_{\C}X ))& \longrightarrow &\LLL^2 (\Lambda ^{k +2j} (T^*_{\C}X)) \\
& \alpha & \mapsto &\omega ^j \wedge \alpha .\\ \end{array} \end{equation} is a (point-wise) quasi-isometric bijection, i.e. for each $\beta \in \LLL^2 (\Lambda ^{k+2j} )$ there exists a unique $ \alpha \in \LLL^2 (\Lambda ^{k} (T^*_{\C}X )$ such that
\begin{equation} L^j \alpha = \beta  \end{equation} and there is a constant (independent of $\beta$) such that 
\begin{equation}  C ^{-1} \norm{\alpha } \leq \norm{\beta}  \leq C \norm{\alpha} . \end{equation}

\end{theorem}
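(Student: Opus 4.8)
The plan is to prove the statement---that on a complex cone the hard Lefschetz operator $L^j = \omega^j \wedge \cdot$ is a pointwise quasi-isometric bijection from $\LLL^2(\Lambda^k)$ onto $\LLL^2(\Lambda^{k+2j})$ when $k + 2j = 2\nu$---by reducing it to a statement that holds \emph{fiberwise} on the smooth part $X \setminus \Sing(X)$ and then controlling the behaviour near the singularity via the conformal conic structure of the metric. The key observation is that the classical hard Lefschetz map is already a linear isomorphism of the exterior algebra at each point of a K\"ahler manifold, so $L^j$ is algebraically bijective on forms; the only thing to be verified is that the operator norms of $L^j$ and its inverse are bounded uniformly over $X \setminus \Sing(X)$ in the $\LLL^2$-metric $g_{\conf}$.

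First I would recall the purely linear-algebraic fact: on a Hermitian vector space $(V, g, J)$ of complex dimension $\nu$ with K\"ahler form $\omega$, the operator $L = \omega \wedge \cdot$ and its adjoint $\Lambda$ generate an $\mathfrak{sl}_2$-action on $\Lambda^* V^*$, and by the classical representation-theoretic argument $L^j : \Lambda^k \to \Lambda^{k+2j}$ is a linear isomorphism whenever $k + 2j = \dim_{\RRR} V$ (the ``hard Lefschetz'' statement at a point). Thus for every $x \in X \setminus \Sing(X)$ the map $L^j_x$ on $\Lambda^*(T^*_{\C, x} X)$ is invertible, and by pointwise application of $(L^j_x)^{-1}$ to $\beta$ we obtain a measurable form $\alpha = (L^j)^{-1}\beta$ which is pointwise the unique solution of $L^j \alpha = \beta$. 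It then remains only to show that $\alpha \in \LLL^2$ when $\beta \in \LLL^2$ and vice versa, with constants independent of $\beta$; equivalently, that the pointwise operator norms $\|L^j_x\|$ and $\|(L^j_x)^{-1}\|$ are bounded uniformly in $x$.

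The uniform bound is where the conformal conic geometry enters, and I expect this to be the main technical point. Away from the singularity (on any fixed compact subset of $X \setminus \Sing(X)$) the bounds are automatic by continuity and compactness; near a singular point $p$ the metric $g_{\conf}$ is quasi-isometric to the model conic metric $dr^2 + r^2 g_{L_p}(0)$, under which the K\"ahler form $\omega$ scales homogeneously. More precisely, the cone structure gives a conformal rescaling $g_{\conf} \sim r^2 (dr^2/r^2 + g_{L_p})$, and since $L^j_x$ and $(L^j_x)^{-1}$ are built from $\omega_x$ and the metric $g_x$ in a way that is invariant under conformal rescalings in the appropriate bidegree---the factor $\omega^j$ and the Hodge-type identifications contribute compensating powers of $r$ because $k + 2j = 2\nu$ is exactly the top-middle balance---the operator norms are bounded on a punctured neighbourhood of $p$ by the corresponding norms for the model conic K\"ahler structure on $\cone(L_p)$. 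Those model norms are constant in $r$ (by the exact homogeneity of the conic metric) and continuous, hence bounded, in the link variable. Combining the compact-set estimate with the near-singularity estimate over the finitely many singular points yields a single constant $C$ valid on all of $X \setminus \Sing(X)$, which gives both $\alpha \in \LLL^2$ and the two-sided inequality $C^{-1}\norm{\alpha} \le \norm{\beta} \le C \norm{\alpha}$. Finally, I would note that this is essentially the content of the $\LLL^2$-K\"ahler package of \cite{leschcone}, Section 5, which I am entitled to invoke; the role of this theorem in the present paper is to record it in the precise pointwise quasi-isometric form needed for the ``Gromov trick'' argument of Section 5, so the proof can reasonably be short, citing \cite{leschcone} for the analytic heart and supplying only the pointwise linear-algebra reduction.
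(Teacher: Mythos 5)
Your overall route is sound and is, in substance, the standard Gromov-style argument: pointwise hard Lefschetz on the exterior algebra of a Hermitian vector space (the $\mathfrak{sl}_2$ fact), applied fiberwise on $X \setminus \Sing(X)$, plus uniform control of the pointwise operator norms of $L^j_x$ and $(L^j_x)^{-1}$, then integration. The paper itself offers no argument at all for this theorem --- it simply cites Griffiths--Harris, Gromov (Theorem 1.2.A) and Br\"uning--Lesch (Section 5) --- so your write-up is more detailed than the paper's, and its conclusion is correct.

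The one place where you misjudge the structure of the proof is the uniformity step, which you flag as ``the main technical point'' and attack via the conformal conic geometry near the singularity. This is a non-issue, and your proposed treatment of it is the weakest part of the argument. The form $\omega$ is the K\"ahler form of the very metric $g_{\conf}$ (induced from Fubini--Study) that is used to define the pointwise norms on $\Lambda^*(T^*_{\C}X)$: at each $x \in X \setminus \Sing(X)$ one can choose a unitary frame in which $(g_x, J_x, \omega_x)$ is the standard flat Hermitian model on $\C^{\nu}$, so $\norm{L^j_x}$ and $\norm{(L^j_x)^{-1}}$ are universal constants depending only on $(\nu, k, j)$ --- the same at every point, with no appeal to compactness away from the singularity or to the cone model near it. By contrast, the detour you sketch is shaky if taken literally: replacing $g_{\conf}$ by a quasi-isometric model conic metric changes the pointwise norms by bounded factors but does \emph{not} make $\omega$ the fundamental form of the comparison metric, so the ``compensating powers of $r$'' argument would need the precise compatibility between $\omega$ and the model metric, which quasi-isometry alone does not give. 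Dropping that paragraph and replacing it by the one-line observation that the constants are pointwise universal makes your proof both correct and shorter, and matches what the cited sources actually do.
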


\begin{proof} See e.g. \cite{griffithsharris}, also \cite{gromov} (Theorem 1.2.A) and  \cite{leschcone} (Section 5).   \end{proof}

For the proof of the main result the following identities will be useful.

\begin{proposition}\label{lemlefschetzidentities}

\begin{itemize}
	\item[(i)] For $\alpha \in \dom (d_{t,\min}) = \dom (d_{t,\max})$ we have the following identity \begin{equation} [L, d_t] \alpha =0. \end{equation} 
	\item[(ii)]  Let $f:X \ra \RRR$ be the real part of a holomorphic function $F:X \ra \C$. For $j \in \NNN$ and  $ \alpha \in \Omega ^ *_{\C, 0} (X \setminus \Sing (X))$ we have the following identity:  \begin{equation}\label{kaehlerid} [L^j, \Delta _t] \alpha =0.\end{equation} 
\item[(iii)] Let $f:X \ra \RRR$ be as in (ii). Let $2k+2j= 2 \nu$.  Then  \begin{equation} [L^ j, \Delta _t^{(k)}]=0.\end{equation}  In particular $L^j (\dom (\Delta _t)) = \dom (\Delta _t)$
\end{itemize}

\end{proposition}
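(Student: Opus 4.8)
The plan is to treat the three parts in order, deducing each from the previous one together with the K\"ahler structure. For part (i), I would use that $L$ is the operator of wedging with the K\"ahler form $\omega$, which is closed, $d\omega=0$. Hence on smooth compactly supported forms $[L,d]\alpha = d(\omega\wedge\alpha)-\omega\wedge d\alpha = (d\omega)\wedge\alpha = 0$, and since $d_t = e^{-tf}de^{tf}$ and $L$ commutes with multiplication by the scalar $e^{\pm tf}$, also $[L,d_t]=0$ on $\Omega^*_{0}$. The point then is to extend this identity from the core $\Omega^*_0(X\setminus\Sing(X))$ to all of $\dom(d_{t,\min})=\dom(d_{t,\max})$: since $L$ is bounded (indeed pointwise bounded, $\omega$ being smooth with bounded norm on the compact $X$), if $\alpha_n\to\alpha$ and $d_t\alpha_n\to d_t\alpha$ with $\alpha_n\in\Omega^*_0$, then $L\alpha_n\to L\alpha$ and $d_t L\alpha_n = L d_t\alpha_n\to L d_t\alpha$, so $L\alpha\in\dom(d_{t,\max})$ and $d_t L\alpha = L d_t\alpha$. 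This is the genuine (if routine) content of (i): the boundedness of $L$ lets the bracket identity pass to the closure.

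For part (ii), I would first record the dual statement. Writing $\Lambda = L^*$ for the adjoint Lefschetz operator and $\delta_t$ for the formal adjoint of $d_t$, one has $[\Lambda,\delta_t]=0$ on $\Omega^*_{\C,0}$ by taking formal adjoints in (i) (on the smooth compactly supported core, where integration by parts is unproblematic), hence $[L,\delta_t]\ne 0$ in general but $[\Lambda,d_t]$ and $[L,\delta_t]$ are governed by the Hodge identities. Actually the cleanest route uses that when $f = \mathrm{Re}(F)$ with $F$ holomorphic, the deformed differential decomposes as $d_t = \partial_{tF} + \bar\partial_{t\bar F}$ in a way compatible with the K\"ahler bigrading, and the deformed Laplacian $\Delta_t = d_t\delta_t+\delta_t d_t$ can be rewritten, via the K\"ahler identities $[\Lambda,\bar\partial]=-i\partial^*$ etc., so that the Lefschetz operator $L$ commutes with it. Concretely, $\Delta_t = e^{-tf}\Delta_{\mathrm{holo}} e^{tf}$-type manipulations plus $[L,\partial]=[L,\bar\partial]=0$ (from $d\omega=0$ and the $(1,1)$-type of $\omega$) and the standard $[L,\Delta]=0$ on a K\"ahler manifold give $[L,\Delta_t]=0$ on $\Omega^*_{\C,0}$; then $[L^j,\Delta_t]=0$ follows by induction on $j$, since $[L^j,\Delta_t] = \sum_{a+b=j-1} L^a[L,\Delta_t]L^b$. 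The main obstacle is part (ii): one must be careful that the K\"ahler identities, which are pointwise algebraic-differential identities valid on the smooth open dense part $X\setminus\Sing(X)$, indeed combine to give $[L,\Delta_t]=0$ there, and that the holomorphicity of $F$ is used exactly to kill the cross-terms between $L$ and the zeroth-order parts of $d_t$ (wedging/contracting with $\partial F$ and $\bar\partial F$); the conic degeneration of the metric near $\Sing(X)$ plays no role here because the identity is required only on $\Omega^*_{\C,0}$, away from the singular set.

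For part (iii), I would combine (ii) with an approximation/closure argument analogous to (i), but now using the explicit description of $\dom(\Delta_t)$ from \eqref{domainlocalmodel} and the fact, recorded in \eqref{domainfriedrich}, that $\Delta_t^{(i)}$ is the Friedrichs extension for $i\ne\nu$ (and for the middle degree $i=\nu$ one uses that $L^j$ shifts degree into the non-middle range when $j\ge 1$, or handles $j=0$ trivially). Since $L^j$ is bounded and commutes with $d_t,\delta_t$ on the core — the latter from (i) and its adjoint — it maps the graph-closure defining $\dom(\Delta_t)$ into itself: if $\omega\in\dom(\Delta_t)$ with approximating sequence $\omega_n\in\Omega^*_0$ in the appropriate graph norm, then $L^j\omega_n$ approximates $L^j\omega$ and $\Delta_t L^j\omega_n = L^j\Delta_t\omega_n$ converges to $L^j\Delta_t\omega$, whence $L^j\omega\in\dom(\Delta_t)$ and $\Delta_t L^j\omega = L^j\Delta_t\omega$. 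Finally, for $2k+2j=2\nu$ the bijectivity of $L^j:\LLL^2(\Lambda^k)\to\LLL^2(\Lambda^{k+2j})$ from Theorem \ref{proplefschetz} upgrades the commutation $[L^j,\Delta_t^{(k)}]=0$ to the statement $L^j(\dom(\Delta_t^{(k)}))=\dom(\Delta_t^{(k+2j)})$, which is what is needed in Section 5. The one place demanding care is again making sure the approximating forms can be chosen in $\Omega^*_{\C,0}$ so that (ii) applies verbatim; this is legitimate since $\Omega^*_0(X\setminus\Sing(X))$ is a core for $\Delta_t$ by construction of the Hilbert complex.
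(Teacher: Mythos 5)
Parts (i) and (iii) of your plan are essentially the paper's own arguments: for (i) the paper simply quotes $[L,d_{\max}]=0$ and observes that the order-zero term $t\,df\wedge$ commutes with $L$ for degree reasons, and for (iii) it runs precisely the approximation argument you sketch, in the form norm of the Friedrichs extension, using \eqref{domainfriedrich} together with the two-sided bounds of Theorem \ref{proplefschetz} (note $k\neq\nu\neq k+2j$ when $j\geq 1$, so \eqref{domainfriedrich} applies in both degrees, as you observed). Two corrections there: taking adjoints in (i) gives $[\Lambda,\delta_t]=0$, not $[L,\delta_t]=0$, so $L^j$ does \emph{not} commute with $\delta_t$ on the core --- what you need, and all the paper uses, is $[L^j,\Delta_t]=0$ from (ii); and the equality $L^j(\dom(\Delta_t^{(k)}))=\dom(\Delta_t^{(k+2j)})$ does not follow from ``forward commutation plus $\LLL^2$-bijectivity'': for the inclusion $\supset$ you must pull an approximating sequence of $\alpha\in\dom(\Delta_t^{(k+2j)})$ back through $(L^j)^{-1}$ and use the lower quasi-isometry bound to see that the pulled-back sequence is Cauchy in the form norm, which is exactly the computation the paper carries out.

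The genuine gap is in (ii), which is the heart of the proposition. You assert $[L,\Delta_t]=0$ via ``$\Delta_t=e^{-tf}\Delta_{\mathrm{holo}}e^{tf}$-type manipulations'', but no such conjugation exists: $d_t=e^{-tf}de^{tf}$ while $\delta_t=e^{tf}\delta e^{-tf}$, so $\Delta_t$ is not conjugate to $\Delta$, and $[L,\Delta_t]\neq 0$ for a general Morse function --- the identity is special to $f=\mathrm{Re}(F)$, so holomorphicity must enter through an actual computation. Moreover the cancellation you point to is misplaced: the cross-terms between $L$ and the order-zero part of $d_t$ (wedging with $df$, i.e.\ with $\partial F$ and $\overline{\partial F}$) vanish for trivial degree reasons with no holomorphicity needed, whereas the terms that genuinely need the hypothesis come from the contraction part of $\delta_t$ and do \emph{not} vanish individually: $[L,\nabla f\haken\,\cdot\,]\alpha=-(\nabla f\haken\omega)\wedge\alpha=-df'\wedge\alpha\neq 0$ with $f'=\Im F$. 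The cancellation occurs only after assembling the full first-order part of $\Delta_t$; the paper does this through the Bismut--Zhang formula $\Delta_t=\Delta+t(\Lie_{\nabla f}+\Lie_{\nabla f}^*)+t^2|\nabla f|^2$, reducing (ii) to $[L,\Lie_{\nabla f}+\Lie_{\nabla f}^*]=0$, which it checks via Cartan's formula and $[L,d]=0$: $[L,\Lie_{\nabla f}]\alpha=-d(\nabla f\haken\omega)\wedge\alpha=-d(df')\wedge\alpha=0$ and $[L,\Lie_{\nabla f}^*]\alpha=(d^cdf)\wedge\alpha=0$, the last using that $\mathrm{Re}(F)$ is pluriharmonic. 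Your alternative route via a deformed splitting $d_t=A+B$, $A=e^{-tF/2}\partial e^{tF/2}$, $B=e^{-t\bar F/2}\bar\partial e^{t\bar F/2}$, can be completed, but only by actually establishing the corresponding deformed K\"ahler identities and rerunning the formal Hodge-theoretic argument; as written, the decisive step of (ii) is asserted rather than proved.
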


\begin{remark} The identity in (iii) will be used only locally near the singularities of $X$. \end{remark}

\begin{proof} (i) Recall that $[L, d_{\max}] = 0$ (see \cite{leschcone}, Section 5). Then also
\begin{equation} [L, d_t] \alpha  = [L, d] \alpha + t \omega  \wedge df \wedge \alpha - t df \wedge \omega  \wedge \alpha = 0 \end{equation} for all forms $\alpha \in \dom d_{t, \max} = \dom d_{\max}$.

(ii) It is enough to prove the claim for $j = 1$, the rest following by iteration. Recall (see e.g. \cite{griffithsharris})  that  on $\Omega ^ *_0 (X \setminus \Sing (X)))$ we have the K\"ahler identities \begin{equation} [L, \delta]= d^ c \text{ and } [L, \Delta ] =0. \end{equation}

 The Witten Laplacian can be expressed as follows (see e.g. \cite{bismutzhang}, Prop. 5.5) \begin{equation} \Delta _t = \Delta + t (\Lie _{\nabla f} + \Lie _{\nabla f}^ * ) + t^ 2 |\nabla f |^ 2, \end{equation} where $\Lie _{\nabla f}$ denotes the Lie derivative in the direction of the gradient vector field $\nabla f$ and $\Lie _{\nabla f}^ * $ denotes its adjoint.  Therefore to prove \eqref{kaehlerid}  it is enough to prove that  on $\Omega ^ *_0 (X \setminus \Sing (X))$:
\begin{equation} [L,\Lie _{\nabla f} + \Lie _{\nabla f}^ * ] =0. \end{equation}
Let us denote by $f': =\Im F$ the imaginary part of the holomorphic function $F$. Using  Cartan's identity and the K\"ahler identity $[L,d]=0$ an easy computation shows that 
\begin{equation} [L,\Lie _{\nabla f}] \alpha  =  - d (\nabla f \haken \omega)  \wedge \alpha = - d (d f' ) \wedge \alpha = - d^2f' \wedge \alpha = 0.\end{equation}
Similarly, using $[L, \delta ] = d^c$ one computes that 
\begin{equation} [L,\Lie _{\nabla f}^*] \alpha   = (d^cd f )\wedge \alpha =  0 ,\end{equation}
since $d^ cd$ is a real operator and $f$ is the real part of a holomorphic function.
The claim follows. 

(iii) Recall from  \eqref{domainfriedrich} that, for $l \not = \nu$, we have $\dom (\Delta _t^ {(l)} ) = \dom (\Delta _t^{(l), \FF})$, where  $\Delta ^ {\FF}_t$ denotes the Friedrichs extension of  $\Delta _{t| \Omega _0^*(X \setminus \Sing (X))}$. The  Friedrichs extension $\Delta _t^{ \FF}$ is the closure of $\Omega^*_0(X \setminus \Sing (X)) $ under the norm
\begin{equation} \sqrt{\langle \Delta _t\alpha, \alpha \rangle + \norm{\alpha}^ 2 }. \end{equation} Let $\alpha \in \dom (\Delta ^{(k+2j)}_t ) = \dom (\Delta ^{(k+2j) , \FF}_t )$. Then there exists a sequence $\alpha _n \in \Omega ^ {k+2j}_0(X \setminus \Sing (X))$ with 
\begin{equation} \begin{split} & \alpha _n \ra \alpha \text{ in } \LLL^2 , \text{ for } n \ra \infty \\
& \langle \Delta _t \alpha  _{nm}, \alpha_{nm} \rangle + \norm{\alpha _{nm}}^2 \ra 0, \text{ for } n\geq m \ra \infty ,\end{split} \end{equation}
where we denoted by $\alpha _{nm} := \alpha _n - \alpha _m$. By Theorem \ref{proplefschetz} there exists a unique form $\beta \in \LLL^2$ and a unique sequence $\beta _{n} \in \Omega _0^ k(X \setminus \Sing (X)) $ such that \begin{equation} \label{ljbeta} L^j \beta = \alpha, L^ j \beta _n = \alpha _n. \end{equation} 

Set again $\beta _{nm} : =\beta _n - \beta _m$. Using Theorem \ref{proplefschetz}, part (ii) and \eqref{ljbeta} we get that
\begin{equation}\begin{split}  |\langle  \Delta _t\beta _{nm} , \beta _{nm} \rangle| & \leq \norm{\Delta_t \beta _{nm} } \norm{\beta _{nm} } \leq C \norm{L^ j \Delta_t \beta _{nm} } \norm{L^j\beta _{nm} } \\ & =  C \norm{\Delta _t\alpha _{nm} } \norm{\alpha _{nm} } , \end{split} \end{equation} and thus  \begin{equation} \begin{split} & \beta _n \ra \beta \text{ in } \LLL^2 , \text{ for } n \ra \infty \\
& \langle \Delta _t \beta  _{nm}, \beta _{nm}\rangle + \norm{\beta _{nm}}^2 \ra 0, \text{ for } n\geq m \ra \infty .\end{split} \end{equation}

This shows that $\beta \in \dom (\Delta _t^ {(k) , \FF}) = \dom (\Delta _t^ {(k) })$ and  therefore \begin{equation} L^ j ( \dom (\Delta _t)) \supset \dom (\Delta _t). \end{equation} The inclusion $L^ j ( \dom (\Delta _t)) \subset \dom (\Delta _t)$ follows similarly. The rest of the claim follows using (ii).

 \end{proof}

\section{Proof of Theorem \ref{main1}}

We need one more result before we can prove  Theorem \ref{main1}.

\begin{lemma}\label{lembounded} There exists $\theta \in \LLL^2 (\Lambda ^1 (T^* \cone (L))) $ with 
\begin{itemize} \item[(i)]  $ \omega = d \theta$  and
\item[(ii)] $|\theta (r, \varphi) | < C $ on $\cone_{ \epsilon} (L)$  for some constant $ C >0$. \end{itemize}\end{lemma}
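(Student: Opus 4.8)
The plan is to use a version of ``Gromov's trick'': I will construct a primitive $\theta$ of the K\"ahler form $\omega$ which is bounded on $\cone _{\epsilon}(L)$, exploiting that near the cone point the complex cone is embedded in $\C^N$ and that there $\omega$ admits a K\"ahler potential vanishing to second order at the apex, so that the natural primitive $d^c$ of the potential grows only linearly in the radial variable $r$. Away from the cone point $\cone(L)$ is contractible, so $\omega$ is automatically exact there; hence the only real work lies in the estimate near the apex, i.e.\ part~(ii).

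First I would work near the cone point. As in the proof of Proposition~\ref{propmorsefunction}, a small neighbourhood of $p$ in $X$ embeds in $\C^N$ with $g_{\conf}$ quasi-isometric to the metric induced from a K\"ahler metric $\omega_{\C^N}=i\partial\bar\partial\phi$ on $\C^N$. Subtracting from $\phi$ the real part of a suitable affine holomorphic function (which does not change $i\partial\bar\partial\phi$), I may assume $\phi(p)=0$ and $d\phi(p)=0$. Setting $\psi:=\phi|_X$ and $\theta_0:=\tfrac12\,d^c\psi$, one has $d\theta_0=\omega$ on $\cone _{2\epsilon}(L)$. Since $\psi$ is smooth with $\psi(p)=0$ and $d\psi(p)=0$, we have $|d\psi|=O(r)$ in the Euclidean metric, and since $|d^c\psi|=|d\psi|$ pointwise and $g_{\conf}$ is quasi-isometric to $dr^2+r^2 g_L$, this yields $|\theta_0|_{g_{\conf}}\le C'r$ on $\cone _{2\epsilon}(L)$; in particular $|\theta_0|<C$ on $\cone _{\epsilon}(L)$, which is (ii). Equivalently one may take $\theta_0=\tfrac12\,r\partial_r\haken\omega$: since $\Lie_{r\partial_r}\omega=2\omega$ and $d\omega=0$, Cartan's formula gives $d\theta_0=\omega$, and $|\theta_0|\le\tfrac12\sqrt{\nu}\,r$. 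This is the crux, and the only place where the complex structure really enters.

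Next I would globalise. Since $\cone(L)$ is contractible, $\omega$ has some global smooth primitive $\tilde\theta$ (for $r>2\epsilon$, where the metric is exactly conic, one may again take $\tilde\theta=\tfrac12\,r\partial_r\haken\omega$, of linear growth). On the contractible set $\cone _{2\epsilon}(L)$ the closed $1$-form $\tilde\theta-\theta_0$ equals $d\sigma$ for some smooth $\sigma$; choosing a cut-off $\chi\equiv 1$ on $\cone _{3\epsilon/2}(L)$ with support in $\cone _{2\epsilon}(L)$, the $1$-form $\theta:=\tilde\theta-d(\chi\sigma)$ (with $\chi\sigma$ extended by zero) is smooth on all of $\cone(L)$, satisfies $d\theta=\omega$, and coincides with $\theta_0$ on $\cone _{3\epsilon/2}(L)$, whence (i) holds and (ii) follows from the previous step. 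It remains to record $\theta\in\LLL^2(\Lambda^1(T^*\cone(L)))$; this is the one bookkeeping point where a little care is needed, since the linear-growth primitive is only locally square-integrable, but $\theta$ is used afterwards exclusively in products $\theta\wedge\alpha$ with the exponentially decaying harmonic forms $\alpha\in\ker\DDD_t$ of Theorem~\ref{hodgelocal}(ii), so its growth at infinity is harmless and can be absorbed (e.g.\ by working with the relevant weighted norm, or since only the behaviour of $\theta$ on a fixed neighbourhood of $\Sing(X)$ enters). The step I expect to be the main obstacle is precisely the estimate (ii) near the apex: on a general real cone no primitive of $\omega$ bounded near the tip need exist, and the argument genuinely uses that the Fubini--Study metric is conformally conic and that $\omega$ carries a local potential vanishing to second order at $p$.
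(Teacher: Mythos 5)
Your proof is correct and follows essentially the same route as the paper's: both produce a primitive of $\omega$ from the local embedding $U_p\cap X\subset\C^N$ that vanishes at the apex and is therefore $O(r)$, hence bounded on $\cone_{\epsilon}(L)$ (the paper via the explicit expansion $\tilde\theta=\sum z_i\,d\bar z_i$ up to second-order terms, you via a K\"ahler potential normalized to vanish to second order). Your globalization step and your honest caveat about global $\LLL^2$-membership go beyond the paper's three-line sketch, which only treats the local picture; this is harmless since only the behaviour of $\theta$ on $\cone_{\epsilon}(L)$ enters the proof of Theorem \ref{main1}.
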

\begin{proof} The local situation near the cone point is as in \eqref{up}. The K\"ahler form $\tilde \omega$ on the ambient $\C ^N$ is closed and therefore exact, which gives (i). Moreover up to terms of order $2$ in $z$ one can write $\tilde \omega$ as $\tilde \omega = d \tilde {\theta}$, where  $\tilde {\theta} = \sum z_i d \overline{z_i}$, from which one deduces (ii).    \end{proof}

{\it Proof of  Theorem \ref{main1}}:  Recall that by Theorem \ref{hodgelocal} (i) and (iii),  we have
\begin{equation}  IH ^ k (M, l^ -) \simeq \ker \DDD _t ^ {(k)} \textrm{ for } k= 0, \ldots , 2 \nu.\end{equation}  We now prove that  
\begin{equation}\label{kert0} \ker \DDD _t ^ {(k)} = 0 \text{ for } k \not = \nu .\end{equation} We will prove \eqref{kert0} for the case where, near the singularities of $X$, the Morse function $f$ can be written as the real part of a holomorphic function. The general case then follows using in addition  perturbation arguments as in \cite{ulcurve}.

 The proof is inspired from \cite{gromov}, Theorem 1.4.A. 
Let $k > \nu$. Set $j :=  k- \nu$. Let us assume that there exists $0 \not = \alpha \in \ker \DDD _t^{(k)}$.  We can assume that $\norm{\alpha } = 1$. Let $\epsilon >0$, $\chi : \RRR ^+  \rightarrow  [0,1]$ be a cutoff function, with $\chi \restriction _{ [0, \epsilon /2] }= 1$, $\chi \restriction _{[ \epsilon , \infty)}  = 0$. Then by the Agmon estimates (Theorem \ref{hodgelocal} (ii)) we have for $t$ large enough \begin{equation}\label{dtdeltattruncated} |d_t (\chi \alpha )|= O(e^{-ct}) , \quad | \delta _t ( \chi \alpha) | = O(e^{-ct}) , \quad  \norm{ \chi \alpha} = 1+ O( e^{-ct}), \end{equation}
for some $c>0$.

By Theorem \ref{proplefschetz} there exists a unique $(2 \nu -k)$-form $\beta$ such that
\begin{equation}\label{followsfromlefschetz} \chi \alpha = L^j (\beta) = \omega ^j \wedge \beta , \quad  \norm{\beta} \leq C \norm{ \chi \alpha} .\end{equation}

By Proposition \ref{lemlefschetzidentities} (ii)  we get moreover that $\beta \in  \dom (\DDD _t^ {(2 \nu -k)})$ and together with \eqref{dtdeltattruncated} \begin{equation}\label{44} \norm{\DDD _t \beta } \leq  C\norm{\DDD _t (\chi \alpha)} =  O(e^{-ct})  .\end{equation} From \eqref{dtdeltattruncated}, \eqref{followsfromlefschetz} and \eqref{44}  we deduce that \begin{equation}\label{dtbeta0} \norm{ d_t \beta}^2  \leq \langle \DDD _t \beta , \beta \rangle \leq C \norm{\DDD _t \beta} \norm{\beta} =   O( e^{-ct}) . \end{equation}  Using Lemma \ref{lembounded} (i)  we may write
\begin{equation}\begin{split}  \chi \alpha &= \omega ^j \wedge \beta = d( \theta \wedge \omega ^{j-1} \wedge \beta) + \theta \wedge \omega ^{j-1} \wedge d \beta \\ & = d_t (\theta \wedge \omega^{ j-1} \wedge \beta) + \theta \wedge \omega ^{j-1} \wedge d_t \beta . \end{split} \end{equation}
Note that in particular, $\theta \wedge \omega ^{j-1} \wedge \beta \in \dom (d_t)$ locally near the cone point. Thus
\begin{equation} \begin{split} \label{chialpha} (\chi \alpha , \chi \alpha ) & = (\chi \alpha, d_t (\theta \wedge \omega^{ j-1} \wedge \beta)) + (\chi \alpha, \theta \wedge \omega^{ j-1} \wedge d_t \beta ) \\
& = (\delta _t (\chi \alpha) , \theta \wedge \omega ^{j-1} \wedge  \beta ) + (\chi \alpha, \theta \wedge \omega^{ j-1} \wedge d_t \beta )  . \end{split} \end{equation}

Using the Cauchy-Schwarz inequality, Lemma \ref{lembounded} (ii), \eqref{dtdeltattruncated} and \eqref{followsfromlefschetz} we get from  \eqref{chialpha}  that \begin{equation} 1 + O(e^{-ct}) = (\chi \alpha , \alpha )  \leq C ( \norm{\delta _t (\chi \alpha)} \norm{\beta} + \norm{\chi \alpha} \norm{  d_t \beta} ) \leq  O ( e^{-ct}) \end{equation} and  thus a contradiction. This shows that $\ker \DDD_t^k = 0 $ for $k > \nu$. 

 Let us denote by $\DDD_{ t, -} $ the model Witten Laplacian for the Morse function $-f$ and by $*$ the Hodge $*$-operator. Then \begin{equation} * \DDD _t^{(k)} \alpha = (-1) ^{k (2\nu-k)} \DDD _{t,-} ^{ (2 \nu-k)} * \alpha .\end{equation} Therefore \eqref{kert0}  for  $k < \nu$  now follows  by duality.  \hfill $\Box$

\paragraph{Acknowledgements}
I would like to thank Jean-Michel Bismut for suggesting work on the Witten deformation on singular spaces and for discussion. I also thank Jochen Br\"uning and 
Jean-Paul Brasselet for discussion. I would also like to thank my colleagues Marco K\"uhnel and Mario Listing for discussion.

\bibliographystyle{abbrv}

\end{document}